\newtheorem{theorem}{Theorem}
\newtheorem{prop}[theorem]{Proposition}
\theoremstyle{remark}
\newcommand{\R}{\mathbb R}
\newcommand{\C}{\mathbb C}
\newcommand{\Z}{\mathbb Z}
\newcommand{\Q}{\mathbb Q}
\newcommand{\A}{\mathbb A}
\newcommand{\bH}{\mathbb H}
\newcommand{\p}{\mathfrak p}
\newcommand{\g}{\mathfrak g}
\newcommand{\ga}{\mathfrak a}
\newcommand{\gn}{\mathfrak n}
\newcommand{\gf}{\mathfrak f}
\newcommand{\gm}{\mathfrak m}
\newcommand{\cO}{\mathcal O}
\newcommand{\tr}{\text{tr}}
\newcommand{\ord}{\text{ord}}
\newcommand{\Vol}{\text{Vol}}
\newcommand{\be}{\begin{equation}}
\newcommand{\ee}{\end{equation}}
\newcommand{\bes}{\begin{equation*}}
\newcommand{\ees}{\end{equation*}}
\newcommand{\ba}{\begin{align}}
\newcommand{\ea}{\end{align}}
\newcommand{\bas}{\begin{align*}}
\newcommand{\eas}{\end{align*}}
\title{Endoscopy and cohomology growth on $U(3)$}
\author{Simon Marshall}
\begin{document}

\begin{abstract}
We apply the endoscopic classification of automorphic forms on $U(3)$ to prove a case of a conjecture of Sarnak and Xue on cohomology growth.
\end{abstract}

\maketitle

\section{Introduction}

Let $U(p,q;\R)$ denote the real unitary group of signature $(p,q)$, and define $\bH_\C$ to be the globally symmetric space $U(2,1;\R)/( U(2;\R) \times U(1;\R))$.  Let $\Gamma \subset U(2,1; \R)$ be an arithmetic congruence lattice arising from a Hermitian form in three variables with respect to a CM extension $E/F$.  If $\cO$ is the ring of integers of $F$ and $\gn \subseteq \cO$ is an ideal, we may define principal congruence subgroups $\Gamma(\gn) \subseteq \Gamma$, and let $Y(\gn)$ be the arithmetic locally symmetric space $\Gamma(\gn) \backslash \bH_\C$.  We give the precise definition of these objects, and the statement of Theorem \ref{main} below, in Section \ref{notnquotient}.  Let $V(\gn) = | \Gamma : \Gamma(\gn)|$, which is asymptotically equal to the volume of $Y(\gn)$.  We let $H^1_{(2)}(Y(\gn), \C)$ be the space of square integrable harmonic 1-forms on $Y(\gn)$, and let $\beta^1_{(2)}(Y(\gn))$ be its dimension.

When $\Gamma$ is cocompact, Sarnak and Xue \cite{SX} have made a general conjecture on the asymptotic multiplicities of automorphic forms which implies that $\beta^1_{(2)}(Y(\gn)) \ll_\epsilon V(\gn)^{1/2 + \epsilon}$, and in the case of $U(2,1; \R)$ they are able to prove the weaker bound $\beta^1_{(2)}(Y(\gn)) \ll_\epsilon V(\gn)^{7/12 + \epsilon}$.  This paper settles their conjecture in this case, by proving the following upper bound on $\beta^1_{(2)}(Y(\gn))$.

\begin{theorem}
\label{main}

We have $\beta^1_{(2)}(Y(\gn)) \ll V(\gn)^{3/8}$, and there exists $\Gamma$ such that $\beta^1_{(2)}(Y(\gn)) \gg V(\gn)^{3/8}$.

\end{theorem}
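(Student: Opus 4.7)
The strategy combines Matsushima's formula with Rogawski's endoscopic classification of the discrete automorphic spectrum of $U(3)$.

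By Matsushima's formula,
\[
\beta^1_{(2)}(Y(\gn)) = \sum_\pi m(\pi)\, \dim(\pi_f^{\Gamma(\gn)})\, \dim H^1(\g, K; \pi_\infty),
\]
summed over the discrete automorphic spectrum of $U(2,1)(\A_F)$. By Vogan--Zuckerman, the representations $\pi_\infty$ of $U(2,1;\R)$ with nonzero $(\g, K)$-cohomology in degree $1$ are non-tempered Langlands quotients attached to $\theta$-stable parabolics with Levi of type $U(1,1) \times U(1)$; in particular, their Langlands parameters factor through the elliptic endoscopic subgroup $U(2) \times U(1) \subset U(3)$, and so the global $\pi$ is itself non-tempered.

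By Rogawski's classification, any such $\pi$ lies in a non-tempered $A$-packet whose Arthur parameter is of ``CAP'' type, transferred from the endoscopic group; up to a finite ambiguity, these parameters are parametrized by Hecke characters $\chi$ on the CM field $E$, with $m(\pi)$ controlled by the Arthur multiplicity formula. Counting characters of conductor compatible with $\gn$, together with the local Hecke-fixed-vector dimensions $\dim \pi_{f,v}^{K_v(\gn)}$, the total contribution is $\ll V(\gn)^{3/8}$: here $V(\gn) \asymp N(\gn)^8$ while the relevant character data grows like $N(\gn)^3$. For the matching lower bound, for suitably chosen $\Gamma$, exhibit a family of $\gg V(\gn)^{3/8}$ Hecke characters whose endoscopic transfers are distinct cuspidal $\pi$ each contributing nontrivially to $H^1_{(2)}(Y(\gn))$.

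The main technical obstacle is the precise conductor bookkeeping --- understanding how the conductor of $\chi$ determines the level of the transferred $\pi$ on $U(3)$, and combining the $A$-packet multiplicities with the local fixed-vector dimensions so that the resulting exponent is exactly $3/8$ rather than a weaker value. For the lower bound one must additionally choose the Hermitian form (hence $\Gamma$) so that the constructed transfers remain cuspidal, are pairwise distinct, and survive the sign condition in the Arthur multiplicity formula that governs non-vanishing.
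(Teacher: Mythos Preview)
Your strategy is exactly the one the paper follows: Matsushima reduces to automorphic $\pi$ with $\pi_\infty\in\{J^+,J^-\}$, Rogawski places every such $\pi$ in a packet $\Pi(\xi)$ coming from a one-dimensional $\xi$ on $H=U(2)\times U(1)$, and the count of characters $\theta\in\widehat{I_E^1}$ of conductor dividing $\gn$ against the local fixed-vector dimensions yields $N\gn^3\asymp V(\gn)^{3/8}$.

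However, what you describe as ``conductor bookkeeping'' hides the actual technical input, and your proposal does not name it. To pass from $\sum_{\pi\in\Pi(\xi)}\dim\pi_f^{K_f(\gn)}$ to something computable on $H$ you need the character identity
\[
\tr(\pi^n(\xi_v)(1_{K_v(\gn)}))+\tr(\pi^s(\xi_v)(1_{K_v(\gn)}))=\tr(\xi_v(f^H))
\]
for a \emph{specific} $f^H$ whose level on $H$ you can read off. The paper proves, by adapting Mars' lattice-counting computation of orbital integrals, that at unramified inert places the functions $1_{K_v(\gn)}$ and $Nv^{-2\ord_v\gn}1_{K^H_v(\gn)}$ are a transfer pair; this is a genuine extension of the fundamental lemma for $(U(3),U(2)\times U(1))$ from the unit of the Hecke algebra to the characteristic functions of principal congruence subgroups. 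Without this result you cannot convert $K(\gn)$-invariance on $G$ into $K^H(\gn)$-invariance on $H$, and the exponent $3/8$ does not fall out of abstract Arthur-packet combinatorics alone. Your proposal would be complete once you identify and prove this transfer statement; everything else is as you sketch, including the lower bound, where the sign in the multiplicity formula is handled by keeping one nonsplit place in $S_f$ available to adjust parity.
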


The proof of Theorem \ref{main} relies on the endoscopic classification of automorphic representations on $U(3)$ by Rogawski\footnote{We note that Flicker claims to have found an error in Rogawski's work (\cite{F}, p. 393), which he also claims to fix.  We shall simply take the results of the endoscopic classification as proven, and leave the question of attribution to those with more expertise than us.},
combined with a slight generalisation of the fundamental lemma for the pair $(U(3), U(2) \times U(1))$.  The essential idea is that the automorphic forms contributing to $H^1_{(2)}(Y(\gn), \C)$ in Matsushima's formula are nontempered, and Rogawski shows that they are all transfers of one dimensional representations on the endoscopic group $U(2) \times U(1)$ of $U(3)$.  Our work lies in making this result quantitative.  Note that Rogawski also proves that $\beta^1_{(2)}(Y(\gn)) = 0$ if $Y(\gn)$ arises from a nine-dimensional division algebra with involution over $E$, and when combined with Theorem \ref{main} this provides an understanding of the growth of $\beta^1_{(2)}$ for all arithmetic congruence lattices in $U(2,1; \R)$.

We remark that if $\gn = \ga \p^k$ with $\ga$ and $\p$ fixed, $\p$ prime, and $k$ growing, Theorem \ref{main} could probably be derived by combining the results of Gelbart, Rogawski, and Soudry \cite{GRS}, which describe endoscopic $L$-packets in terms of the theta lift from $U(2)$ to $U(3)$, with the main theorem of \cite{CM}.

{\bf Acknowledgements}: We would like to thank Frank Calegari, Dihua Jiang, and Matthew Stover for helpful discussions, and Peter Sarnak for the suggestion to study cohomology growth using Rogawski's work.

\section{Notation}

\subsection{Number fields}

Let $E/F$ be a CM extension of number fields, with $\cO_E$ and $\cO = \cO_F$ their rings of integers and $\A_E$ and $\A = \A_F$ their rings of adeles.  We denote the maximal compact subrings of the finite adeles $\A_{E,f}$ and $\A_f$ by $\widehat{\cO}_E$ and $\widehat{\cO}$.  Let $N$ be the norm map from $E$ to $F$, $\A_E^1$ the group of norm 1 ideles of $E$, and $I_E^1 = \A_E^1 / E^1$.  We shall denote places of $E$ and $F$ by $w$ and $v$ respectively, with corresponding completions $E_w$ and $F_v$, and define $E_v = E \otimes_F F_v$.

Fix a character $\mu$ of $\A_E / E^\times$ whose restriction to $\A/F^\times$ is the character associated to $E/F$ by class field theory.  Let $S_f$ be a set of finite places of $F$ containing all places at which $E/F$ is ramified, all places below those at which $\mu$ is ramified, all $v | 2$, and at least one place that is nonsplit in $E$.  Let $S_\infty$ be the set of infinite places of $F$, and let $S = S_\infty \cup S_f$.

\subsection{Unitary groups}

Let $\Phi_n = (\Phi_{ij})$, where $\Phi_{ij} = (-1)^{i-1} \delta_{i, n+1-j}$ and $\delta_{a,b}$ is the Kronecker delta.  If $x \in E$ satisfies $\tr_{E/F}(x) = 0$, then $\Phi_n$ is a Hermitian form with respect to $E/F$ if $n$ is odd, and $x \Phi_n$ is Hermitian if $n$ is even.  We let $U(n)$ be the unitary group of this Hermitian form, so that

\bes
U(n)(F) = \{ g \in GL_n(E) | g \Phi_n {}^{t}\overline{g} = \Phi_n \}.
\ees
For any ideal $\gn \subseteq \cO$, we define the compact subgroup $U(n,\gn) \subset U(n)(\A_f)$ by

\bes
U(n,\gn) = \{ g \in U(n)(\widehat{\cO}) \subset GL_n(\widehat{\cO}_E) | g \equiv I_n (\gn \widehat{\cO}_E) \}.
\ees
We shall denote $U(3)$ by $G^*$.  If $\gn \subset \cO$ is an ideal, define the compact subgroup $K^*(\gn) = \otimes_v K^*_v(\gn)$ of $G^*(\A)$ by setting $K^*_v(\gn) = U(2;\R) \times U(1;\R)$ if $v | \infty$, and $\otimes_{v \nmid \infty} K^*_v(\gn) = U(3,\gn)$.

Choose a place $v_0 \in S_\infty$.  Let $\Phi$ be a Hermitian form on $E^3$ with respect to $E/F$ that is indefinite at $v_0$ and definite at all other real places of $F$, and let $G$ be the unitary group of $\Phi$.  It is known that the isomorphism class of $G$ over $F$ depends only on the extension $E/F$ and the place $v_0$, see for instance Section 1.2 of \cite{PY}; in particular, $G$ is quasi-split iff $F = \Q$.

If $v$ is a finite place of $F$ that splits in $E$, then there are isomorphisms from $G_v$ and $G^*_v$ to $GL_3(F_v)$ that are canonical up to inner automorphism.  If $v$ is finite and nonsplit in $E/F$, it follows from a theorem of Landherr \cite{L} that there is a unique Hermitian form on $E_v^3$ with respect to $E_v/F_v$, and this gives an isomorphism from $G_v$ to $G^*_v$ that is canonical up to inner automorphism.  If we let $K = \otimes_v K_v$ be a compact open subgroup of $G(\A)$ such that $K_{v_0} = U(2;\R) \times U(1;\R)$, $K_v = U(3;\R)$ when $v_0 \neq v | \infty$, and $K_v$ is hyperspecial whenever $v \notin S$, we may therefore fix isomorphisms $\phi_v : G_v \simeq G_v^*$ for all finite $v$ such that $\phi_v K_v = K^*_v$ for $v \notin S$.

\subsection{Adelic quotients}
\label{notnquotient}

If $\gn \subset \cO$ is relatively primes to $S_f$, we define $K(\gn) = \otimes_v K_v(\gn)$ by setting $K_v(\gn) = K_v$ for $v \in S$, and $K_v(\gn) = \phi_v( K^*_v(\gn) )$ for $v \notin S$.  We define $Y(\gn)$ to be the adelic quotient $G(F) \backslash G(\A) / K(\gn) Z(\A)$.  It is a finite union of finite volume quotients of the globally symmetric space $\bH_\C$, and it is compact iff $F \neq \Q$.  If we fix a translation-invariant volume form on $\bH_\C$ and let $\Vol(Y(\gn))$ be the volume of $Y(\gn)$ with respect to this form then we have $\Vol(Y(\gn)) = c(\gn) V(\gn)$, where

\be
\label{V}
V(\gn) = |U(3,\cO) Z(\A_f) : U(3,\gn) Z(\A_f) |
\ee
and $| \log c(\gn) |$ is bounded in terms of our choice of $K_v$ for $v \in S_f$.  Note that the formulas for the orders of $GL_3$ and $U(3)$ over a finite field (see \cite{A}) imply that $N\gn^8 \ll V(\gn) \ll N\gn^8$.

With this notation, the precise statement of Theorem \ref{main} is that $\beta_{(2)}^1(Y(\gn)) \ll V(\gn)^{3/8}$, and that $\beta_{(2)}^1(Y(\gn)) \gg V(\gn)^{3/8}$ if $K_v$ are chosen small enough for $v \in S_f$.

\subsection{Endoscopic groups}

Let $H \simeq U(2) \times U(1)$ be the unique elliptic endoscopic group of $G^*$, which we consider to be embedded in $G^*$ as

\bes
\left( \begin{array}{ccc} * & & * \\ & * & \\ * & & * \end{array} \right).
\ees
We let $\det_0 : H \rightarrow U(1)$ and $\lambda : H \rightarrow U(1)$ be the maps given by the determinant on the $U(2)$ factor and projection onto the $U(1)$ factor.  We fix an embedding of $L$-groups $^{L}H \rightarrow{} ^{L}G^*$ associated to the character $\mu$ as in \cite{R1}, Section 4.8.1.  The centers of $G$ and $G^*$ will both be denoted by $Z \simeq U(1)$, and we consider $Z$ to be a subgroup of $H$ in the natural way.  As $Z(F) \backslash Z(\A) \simeq I_E^1$, $\mu$ gives a character of $Z(F) \backslash Z(\A)$ by restriction, which will also be denoted $\mu$.  We shall denote the restriction of $\mu$ to $E_w$ by $\mu_w$, and its restriction to $Z_v$ by $\mu_v$.

\subsection{Measures and function spaces}

Choose Haar measures $dg = \otimes dg_v$, $dg^* = \otimes dg_v^*$, and $dh = \otimes dh_v$ on $G(\A)$, $G^*(\A)$ and $H(\A)$ respectively, where $dg_v$ and $dg_v^*$ are equal under the isomorphism $G_v \simeq G_v^*$ at all finite places.  We assume that the local measures give mass 1 to a hyperspecial maximal compact for all $v \notin S$.  Let $dz = \otimes_v dz_v$ be the Haar measure on $Z(\A)$ that gives the maximal compact mass 1 everywhere, and let $d\overline{g} = \otimes_v d\overline{g}_v$ be the measure on $G(\A) / Z(\A)$ given by $d\overline{g}_v = dg_v / dz_v$.

For any place $v$ and a character $\omega$ of $E_v^1 \simeq Z_v$, we define $C(G_v, \omega)$ to be the space of smooth complex-valued functions $f$ on $G_v$ such that $f$ is compactly supported modulo $Z_v$, $f(zg) = \omega(z)^{-1} f(g)$, and if $v$ is infinite then $f$ is $K_v$-finite.  If $\omega$ is a character of $I_E^1$, we define $C(G, \omega)$ to be the analogous space in the global case.  The spaces $C(G^*, \omega)$ and $C(H, \omega)$ are defined similarly.

If $\pi$ is an admissible representation of $G_v$ with central character $\omega$, and $f \in C(G_v, \omega)$, we define $\pi(f)$ to be

\bes
\pi(f) = \int_{G_v / Z_v} f(g) \pi(g) d\overline{g}.
\ees

\subsection{Automorphic forms}

If $\omega$ is a unitary character of $Z(F) \backslash Z(\A) \simeq I_E^1$, we let $L^2(G,\omega)$ be the space of square integrable complex functions $\phi$ on $G(F) \backslash G(\A)$ that satisfy $\phi(zg) = \omega(z) \phi(g)$.  We let $L^2_d(G,\omega)$ be the subspace that decomposes discretely under the action of $G(\A)$.  We define $L^2_d(H, \omega)$ similarly, recording only the action of the subgroup $Z$ of $Z(H)$.  We denote the set of discrete $L$-packets on $G$ and $H$ by $\Pi(G)$ and $\Pi(H)$; see \cite{R1}, Section 12 and 13.3, for the definition and description of these sets.

\section{The packets $\Pi(\xi)$}
\label{packets}

In \cite{R1}, Sections 13 and 14, Rogawski defines an $L$-packet $\Pi(\xi) \in \Pi(G)$ for every one dimensional representation $\xi \in L^2_d(H,\omega)$ satisfying certain conditions.  In this section we recall the definition and important properties of these packets.

\subsection{Split finite places}

Let $v$ be a finite place that splits in $E/F$, so that $E_v = E_w \oplus E_{w'}$.  We identify $E_w$ with $E_{w'}$.  We have

\bes
G_v = \{ (g,h) | g, h \in GL_3(E_w), h = \Phi ^{t}g^{-1} \Phi^{-1} \},
\ees
and

\bes
Z_v = \{ ( xI, x^{-1}I) | x \in E_w^* \} \simeq E_v^1 \simeq E_w^*.
\ees
Note that under the identification $Z_v \simeq E_w^*$, we have $\mu_v(x) = \mu_w(x)^2$.

Let $\xi$ be a unitary character of $H_v \simeq GL_2(E_w) \times GL_1(E_w)$, and let $\omega$ denote its restriction to $Z_v$.  If $P$ is the standard parabolic subgroup of $G_v$ with Levi $H_v$, the local packet $\Pi_v(\xi)$ is the unitarily induced representation $I(\xi \otimes \det_0 \circ \mu_w)$ from $P$ to $G_v$ (\cite{F}, Proposition 4, p. 279).  It has central character $\omega \otimes \mu_v$, and we shall denote it by $\pi^n(\xi)$.

\subsection{Nonsplit finite places}

If $v$ is a finite place that does not split in $E/F$ and $\xi$ is a unitary character of $H_v$, the local packet $\Pi_v(\xi)$ contains two representations $\pi^n(\xi)$ and $\pi^s(\xi)$.  The representation $\pi^n(\xi)$ is nontempered, and spherical whenever all data are unramified, while $\pi^s(\xi)$ is supercuspidal.  If the restriction of $\xi$ to $Z_v$ is $\omega$, both representations in $\Pi_v(\xi)$ have central character $\omega \otimes \mu_v$.

\subsection{Real places}

We take the following results from \cite{R1}, Section 12.3.  For any real place $v$, let $t_v \in \Z$ be such that $\mu_v(z) = (z/\overline{z})^{t_v + 1/2}$.

To describe $\Pi(\xi)$ at the place $v_0$, we recall the classification of cohomological representations of $U(2,1; \R)$ (\cite{R1}, Proposition 15.2.1 and \cite{BW}, Theorem 4.11).  If $\pi$ is an irreducible unitary $G_{v_0}$-module, we have $H^1(\g, K; \pi) = 0$ unless $\pi \in \{ J^+, J^- \}$, where $J^+$ and $J^-$ are nontempered.  When $\pi = J^\pm$, we have $H^1(\g, K; \pi) = \C$ with Hodge types $(1,0)$ and $(0,1)$ respectively.  In addition, $H^2(\g, K; \pi) = 0$ unless $\pi \in \{ 1, D, D^+, D^- \}$, where $1$ is the trivial representation, and $D$, $D^+$, and $D^-$ are discrete series representations with Hodge types $(1,1)$, $(2,0)$ and $(0,2)$ respectively.

For any one-dimensional representation $\xi$ of $H_{v_0}$, the local packet $\Pi_{v_0}(\xi)$ is disjoint from $\{ J^\pm \}$ unless $\xi = (\det_0)^{-t_{v_0}-1} \lambda^1$ (case 1) or $\xi = (\det_0)^{-t_{v_0}} \lambda^{-1}$ (case 2).  In the remaining two cases, we have

\bes
\Pi_{v_0}(\xi) = \Big\{ \begin{array}{ll} \{ J^+, D^- \} & \text{in case 1,} \\ \{ J^-, D^+ \} & \text{in case 2.} \end{array}
\ees
We will denote the nontempered member of $\Pi(\xi)$ by $\pi^n(\xi)$, and the tempered member by $\pi^s(\xi)$.

At the remaining places, we have $G_v = U(3; \R)$.  The packet $\Pi_v(\xi)$ is only defined for $\xi$ of the form $(\det_0)^{p-t_v} \lambda^q$ with $p-q \ge 1$ or $q-p \ge 2$, and when it is, it consists of one irreducible representation of $G_v$ which we denote $\pi^s(\xi)$.  $\Pi_v(\xi)$ is the trivial representation exactly when $\xi$ is either $(\det_0)^{-t_v-1} \lambda^1$ or $(\det_0)^{-t_v} \lambda^{-1}$.

\subsection{Global packets}
\label{packetsglobal}

Let $\xi \in L^2_d(H, \omega)$ be a one dimensional representation.  Define the global $L$-packet $\Pi(\xi)$ to be $\otimes_v \Pi_v(\xi_v)$.  It is proven that $\Pi(\xi) \in \Pi(G)$ (\cite{R1}, Theorem 13.3.2 and Section 14), and that any representation $\pi = \otimes_v \pi_v \in L^2_d(G, \omega)$ satisfying $\pi_{v_0} \simeq J^\pm$ must lie in a packet $\Pi(\xi)$ for some $\xi$ (\cite{R1}, Theorem 13.3.6).  If $\pi = \otimes_v \pi_v \in \Pi(\xi)$, define $n(\pi)$ to be the number of places at which $\pi_v = \pi^s(\xi_v)$.  It is known (see \cite{R2} and \cite{F}, p. 218) that there is a global factor $\varepsilon(\xi, \mu) = \pm 1$ such that

\bes
m(\pi) = \tfrac{1}{2} (1 + \varepsilon(\xi, \mu) (-1)^{n(\pi)} ).
\ees
In particular, $m(\pi)$ is either 0 or 1.

\subsection{Transfers and character identities}

Suppose that $v$ is finite and $f \in C(G_v, \omega)$.  There exists a function $f^H \in C(H, \omega \mu_v^{-1})$, called a transfer of $f$, such that the unstable orbital integrals of $f$ match the stable integrals of $f^H$; see \cite{R1}, Section 4.9 for details.  Note that we define this transfer in the non-quasi-split case by applying our identification $\phi_v : G_v \simeq G^*_v$ and applying the usual transfer for $G^*$.  When $\xi$ is a character of $H_v$ such that the restriction of $\xi$ to $Z_v$ is $\omega \mu_v^{-1}$ and $v$ is split, we have  (\cite{R1}, Lemma 4.13.1)

\bes
\tr( \pi^n(\xi)(f) ) = \tr( \xi(f^H)),
\ees
and when $v$ is nonsplit we have (\cite{R1}, Corollary 12.7.4 and \cite{F}, p. 215)

\be
\label{charinert}
\tr( \pi^n(\xi)(f) ) + \tr( \pi^s(\xi)(f) ) = \tr( \xi(f^H)).
\ee

\section{Proof of Theorem \ref{main}}
\label{proof}

\subsection{The upper bound}
\label{proofupper}

We modify our notation sightly, and now define $J^\pm$ to be the representation of $G_\infty = \otimes_{v | \infty} G_v$ that is equal to $J^\pm$ at $G_{v_0}$ and trivial at all other places.  We also define $\Xi_\infty$ to be the set of characters of $H_\infty$ that are equal to either $(\det_0)^{-t_v-1} \lambda^1$ or $(\det_0)^{-t_v} \lambda^{-1}$ at each place $v$.  By Matsushima's formula, we have

\bes
\beta^1_{(2)}( Y(\gn)) = \sum_{ \substack{ \pi \in L^2_d(G, 1) \\ \pi_\infty \simeq J^\pm } } m(\pi) \dim( \pi_f^{K_f(\gn)} ).
\ees
The results of Section \ref{packets} allow us to rewrite this as

\bes
\beta^1_{(2)}( Y(\gn)) = \sum_{ \substack{ \xi \in L^2_d(H, \mu^{-1}) \\ \xi_\infty \in \Xi_\infty } } \sum_{ \substack{ \pi \in \Pi(\xi) \\ \pi_\infty \simeq J^\pm } } m(\pi) \dim( \pi_f^{K_f(\gn)} ).
\ees
Let $1_{K(\gn)} \in C( G(\A_f), 1)$ be the characteristic function of $Z(\A_f) K_f(\gn)$.  We have

\bes
\int_{G(\A_f) / Z(\A_f)} 1_{K(\gn)} d\overline{g} = c V(\gn)^{-1},
\ees
where $V(\gn)$ is as in (\ref{V}) and $c$ depends only on our choice of $K_v$ for $v \in S_f$, and so applying the upper bound $m(\pi) \le 1$ gives

\be
\label{beta}
\beta^1_{(2)}( Y(\gn)) \ll V(\gn) \sum_{  \substack{ \xi \in L^2_d(H, \mu^{-1}) \\ \xi_\infty \in \Xi_\infty } } \sum_{ \pi \in \Pi(\xi) } \tr( \pi_f( 1_{K(\gn)} ) ).
\ee

We now transfer $1_{K(\gn)}$ to a function $1^T_{K(\gn)} = \otimes_v 1^T_{K_v(\gn)} \in C( H(\A_f), \mu^{-1})$.  If $v \in S_f$, we let $1^H_{K_v(\gn)} \in C(H_v, \mu_v^{-1})$ be any transfer of $1_{K_v(\gn)}$, and set $1^T_{K_v(\gn)} = 1^H_{K_v(\gn)}$.  When $v \notin S$, we let $K^H_v$ be a hyperspecial maximal compact subgroup of $H_v$, and let $K^H_v(\p^n)$ be its standard principal congruence subgroups.  We define $1_{K^H_v(\gn)} \in C(H_v, \mu_v^{-1})$ to be the function supported on $Z_v K^H_v(\gn)$ and equal to 1 on $K^H_v(\gn)$, which is well defined as $\mu_v$ was assumed to be unramified, and set $1^T_{K_v(\gn)} = Nv^{-2 \ord_v \gn} 1_{K^H_v(\gn)}$.  When $v$ is split, the character identity

\be
\label{Kcharsplit}
\tr( \pi^n(\xi_v)( 1_{K_v(\gn)} ) ) = \tr( \xi_v(1^T_{K_v(\gn)}) )
\ee
may be directly verified.  When $v$ is inert, the character identity

\be
\label{Kcharinert}
\tr( \pi^n(\xi_v)( 1_{K_v(\gn)} ) ) + \tr( \pi^s(\xi_v)( 1_{K_v(\gn)} ) ) = \tr( \xi_v(1^T_{K_v(\gn)}))
\ee
follows from (\ref{charinert}) and the following proposition.

\begin{prop}

If $v \notin S$ is inert, the functions $1_{K_v(\gn)}$ and $Nv^{-2 \textup{ord}_v \gn} 1_{K^H_v(\gn)}$ are a transfer pair.

\end{prop}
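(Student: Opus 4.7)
Plan: The proposition is the level-$\gn$ generalization of the fundamental lemma for the endoscopic pair $(G^*, H) = (U(3), U(2) \times U(1))$ at the inert unramified place $v$, the case $\gn = \cO$ being subsumed in Rogawski's framework. My strategy is to verify the matching of unstable and stable orbital integrals directly, reducing the level-$\gn$ computation to the level-$\cO$ one via a smoothness-plus-Hensel-lifting argument on the relevant group schemes over $\cO_v$.

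I first fix a strongly regular semisimple matching pair $(\gamma, \gamma_H) \in G^*_v \times H_v$. Because $1_{K_v(\gn)}$ has support in $Z_v K_v(\gn)$, the orbital integral $O^\kappa_\gamma(1_{K_v(\gn)})$ vanishes unless some $G^*_v$-conjugate of $\gamma$ lies in $Z_v K_v(\gn)$, so after a central twist we may take $\gamma \in K_v$ with $\gamma \equiv I \pmod{\p_E^n}$, and similarly for $\gamma_H$. Both orbital integrals then become counts over $y \in G^*_\gamma \backslash G^*_v$ satisfying $y^{-1}\gamma y \in K_v(\gn)$, weighted by the endoscopic character $\kappa$ on the $G^*$-side and stably summed on the $H$-side; since $K_v(\gn)$ is the kernel of reduction modulo $\p_E^n$ and likewise for $K^H_v(\gn)$, this count stratifies according to the image of $y$ in the finite quotient.

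The crux is a smoothness argument. The orbit map $y \mapsto y^{-1}\gamma y$ on $G^*_v$ is smooth at any regular $\gamma$, and smoothness of the integral models of $G^*$ and its centralizer $G^*_\gamma$ (together with Hensel's lemma) lets one relate the solution count modulo $\p_E^n$ to the solution count modulo $\p_E$ by an explicit power of $Nv$; the analogous statement holds on the $H$-side. Because matching regular semisimple elements have isomorphic three-dimensional centralizer tori $G^*_\gamma \cong H_{\gamma_H}$, and because $\dim G^* - \dim H = 4$, the ratio of the two counts at level $\p_E^n$ differs from the ratio at level $\p_E$ by a specific power of $Nv$; combining this with the fundamental lemma at level $\cO$ and the Haar-measure normalizations on $G^*_v$, $H_v$, and $Z_v$ yields the correction $Nv^{-2n}$ announced in the statement.

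The main obstacle is tracking the Langlands-Shelstad transfer factor $\Delta(\gamma, \gamma_H)$ uniformly in $n$. Although $\Delta$ itself depends only on $(\gamma, \gamma_H)$, one must verify that the Kottwitz-invariant labels $\kappa(\mathrm{inv}(y))$ on the $G^*$-side at level $n$ are correctly matched, under the Hensel lift, to their values at level $1$. Smoothness of the relevant integral models gives this essentially for free, but the precise bookkeeping, including the Haar-measure volume factors of $K_v$ versus $K_v(\p^n)$ and $K^H_v$ versus $K^H_v(\p^n)$, is what pins down the normalizing constant to be exactly $Nv^{-2n}$ rather than some other power of $Nv$. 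A secondary issue, the matching at singular elements, is subsumed in the standard theory of germ expansions and introduces no new input beyond the fundamental lemma itself.
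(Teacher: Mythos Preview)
Your smoothness/Hensel argument has a genuine gap. The claim that ``smoothness of the integral models of $G^*$ and its centralizer $G^*_\gamma$ lets one relate the solution count modulo $\p_E^n$ to the solution count modulo $\p_E$ by an explicit power of $Nv$'' is simply false for the orbital integrals in question. The orbital integral is over the \emph{non-compact} quotient $G^*_\gamma\backslash G^*_v$; it is a (signed) count of self-dual lattices $\Lambda$ with $(\gamma-1)\Lambda\subseteq\p^r\Lambda$, not a count of points of a scheme over $\cO_v/\p^r$, so Hensel/smoothness over the integral model does not control how the integral scales with $r$. Concretely, in the case where the centralizer algebra is $E^3$ with eigenvalue-valuation parameters $A,B,C$, the paper's explicit computation gives
\[
\Phi^\kappa(\gamma,1_{K(\p^r)})=(-q)^{B+C-2r}\,\frac{q^{A-r}(q+1)-2}{q-1},
\]
whose ratio to the level-$0$ value $(-q)^{B+C}\frac{q^{A}(q+1)-2}{q-1}$ is \emph{not} a power of $q$ independent of $\gamma$. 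The same is true on the $H$-side. The transfer identity survives only because the $\gamma$-dependent factor $\frac{q^{A-r}(q+1)-2}{q-1}$ is the \emph{same} on both sides; establishing that coincidence is the actual content of the proposition and requires a computation, not a dimension count.

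The paper's proof goes a different route: it reruns Mars' lattice-counting argument for the ordinary fundamental lemma, replacing the condition $(\gamma-1)\Lambda\subseteq\Lambda$ by $(\gamma-1)\Lambda\subseteq\p^r\Lambda$ throughout, and checks case-by-case (centralizer algebra $E\times EL$ or $E^3$) that this has the uniform effect of shifting the parameters $A,B,C$ to $A-r,B-r,C-r$ in all of Mars' formulas. The transfer identity with the factor $q^{-2r}$ then drops out of the explicit answers. Your outline would need to be replaced by (or reduced to) this kind of explicit verification; the heuristic that $\dim G^*-\dim H=4$ forces the exponent $-2r$ is suggestive but does not by itself furnish a proof.
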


\begin{proof}

Note that our assumption that $v \notin S$ implies that $E/F$ and $\mu$ are unramified at $v$ and that $v \nmid 2$.  Consequently, the proposition is a slight generalisation of the fundamental lemma for the pair $(U(3), U(2) \times U(1))$ in the special case of the identity in the Hecke algebra proven by Blasius-Rogawski \cite{BR} and Kottwitz \cite{K}, Flicker \cite{F1}, and Mars (\cite{F}, Section 2.I.6.).  We have checked that it may be proven by minor modifications to the arguments of Blasius-Rogawski-Kottwitz or Mars; we describe them in the case of Mars' proof.

As we shall work only at the place $v$ throughout the proof, we drop the subscript $v$ and assume all objects to be local at $v$.  We let $\pi$ be a prime element of $\cO_F$, and $q$ the order of its residue field.  We must verify the relation

\be
\label{transfer}
\Delta_{G/H}(\gamma) \Phi^\kappa(\gamma, 1_{K(\pi^r)}) = q^{-2r} \Phi^\text{st}(\gamma, 1_{K^H(\pi^r)})
\ee
for all semisimple $\gamma \in H$ whose inclusion in $G^*$ is regular.  It may be easily seen that it suffices to verify (\ref{transfer}) if we replace $1_{K(\pi^r)}$ and $1_{K^H(\pi^r)}$ by the characteristic functions of $K(\pi^r)$ and $K^H(\pi^r)$, and we henceforth do so.  This implies that both sides of (\ref{transfer}) vanish unless the $U(1)$ component of $\gamma$ lies in $U(1,\pi^r)$, and so after multiplying $\gamma$ by an element of $Z$ we may assume without loss of generality that the $U(1)$ component of $\gamma$ is 1.

Given an element $\gamma'$ in the stable conjugacy class $\cO_\text{st}(\gamma)$ and a hyperspecial maximal compact $K \subset G^*$, Mars computes the orbital integrals

\bes
\int_{G^* / G^*_{\gamma'} } 1_K( g \gamma' g^{-1} ) dg
\ees
comprising $\Phi^\kappa(\gamma, 1_{K})$ by counting self-dual lattices $\Lambda \subset E^3$ fixed by $\gamma'$.  The case in which $H_\gamma$ is isotropic is trivial, and in the remaining cases the centraliser $Y$ of $\gamma$ in $M_3(E)$ is isomorphic to $E^3$ or $E \times EL$, where $L/F$ a ramified quadratic extension and $EL$ is the compositum of the two fields.
\\

If $Y = E \times EL$, Mars lets $t \in EL^\times$ be one of the eigenvalues of $\gamma$ distinct from 1.  He writes $t = t_1 + t_2 w$ with $w = \sqrt{\pi} \in L$ and $t_i \in E$, and defines $A = \ord_E(t_1 - 1)$ and $B = \ord_E(t_2)$.  We then have

\be
\label{tfactor1}
\Delta_{G/H}(\gamma) = (-q)^{-n} \quad \text{where} \quad n = \min(2A, 2B+1),
\ee
see \cite{K}, Section 3.  Mars then counts lattices $\Lambda$ that satisfy $\Lambda^* = \Lambda$ and $\gamma \Lambda = \Lambda$ in terms of $A$ and $B$.  To calculate orbital integrals of the characteristic function of $K(\pi^r)$, we must instead count lattices that satisfy $\Lambda^* = \Lambda$, and such that $\gamma \Lambda = \Lambda$ and $\gamma$ acts trivially on $\Lambda / \pi^r \Lambda$; this latter condition is equivalent to $(\gamma - 1)\Lambda \subseteq \pi^r \Lambda$.

We now describe the key changes in Mars' formulas that follow from changing $(\gamma - 1)\Lambda \subseteq \Lambda$ to $(\gamma - 1)\Lambda \subseteq \pi^r \Lambda$.  Condition (2) on p. 298 now reads

\bes
t-1 \in \pi^r \cO_{EL}(n) \quad \text{and} \quad t-1 \in c \pi^r \cO_{EL}(n).
\ees
Note that the first condition arose because $(t-1)N_2 \subseteq \pi^r M_2$ implies $(t-1)N_2 \subseteq \pi^r N_2$.  In particular, this implies that $n \le B-r$.  Condition (*) on p. 299 now reads

\bes
\begin{array}{lll} \xi \eta^{-1} \pi^{m-n} t_2 \equiv t_1 - 1 & \text{mod } \pi^{2m+r} \cO_E & \text{in case 1}, \\
 & \text{mod } \pi^{2n + 1 +r} \cO_E & \text{in case 2}.
\end{array}
\ees
Following through Mars' computations, one sees that changing the conditions on $\Lambda$ in this way have the effect of replacing $A$ and $B$ with $A-r$ and $B-r$ in all his formulae for the number of $\Lambda$, and so we have

\bes
\Phi^\kappa(\gamma, 1_{K(\pi^r)}) = (-q)^{n-2r} \frac{ q^{B+1-r} -1}{ q-1}.
\ees
A similar computation in the case of $U(2)$ gives

\bes
\Phi^\text{st}(\gamma, 1_{K^H(\pi^r)}) = \frac{ q^{B+1-r} -1}{ q-1},
\ees
and combining these with (\ref{tfactor1}) gives (\ref{transfer}).
\\

When $Y = E^3$, Mars denotes the eigenvalues of $\gamma$ by $1$, $t_2$, $t_3 \in E^1$.  He defines $A = \ord_E(t_2 - t_3)$, $B = \ord_E(1 - t_3)$, and $C = \ord_E(1 - t_2)$, so that

\bes
\Delta_{G/H}(\gamma) = (-q)^{-B-C}
\ees
(see \cite{BR}, Section 6.3).  Condition (b) on p. 302 changes to $t_{23} M_{23} = M_{23}$ and $(t_{23}-1) N_{23} \subseteq \pi^r M_{23}$, and the equations in braces at the top of p. 303 now read

\begin{align*}
& n_2 + n_3 \ge n_1, \quad n_1 + n_3 \ge n_2, \quad n_1 + n_2 \ge n_3 \\
& \tfrac{1}{2}(n_2 + n_3 - n_1) \le A-r, \quad \tfrac{1}{2}(n_1 + n_3 - n_2) \le B-r, \\
& \tfrac{1}{2}(n_1 + n_2 - n_3) \le C-r, \\
& N_{E/F}(u) \in \nu_2 \nu_3^{-1} \pi^{n_2 - n_3 - \ord_E(\nu_2) + \ord_E(\nu_3)} + \pi^{n_2 - n_1} \cO_E, \\
& \quad (+ \pi^{n_2-n_1} \cO_E^\times \text{ if } n_1 + n_2 > n_3 \text{ and } n_1 + n_3 > n_2), \\
& (t_2 - 1) N_{E/F}(u) + (t_3-1) \nu_2 \nu_3^{-1} \pi^{n_2 - n_3 - \ord_E(\nu_2) + \ord_E(\nu_3)} \in \pi^{n_2 + r} \cO_E.
\end{align*}
As before, this has the effect of replacing $A$, $B$ and $C$ with $A-r$, $B-r$ and $C-r$ in all the lattice-counting formulae, and we obtain

\bes
\Phi^\kappa(\gamma, 1_{K(\pi^r)}) = (-q)^{B+C-2r} \frac{ q^{A-r}(q+1) - 2}{ q-1}.
\ees
Likewise, we have

\bes
\Phi^\text{st}(\gamma, 1_{K^H(\pi^r)}) = \frac{ q^{A-r}(q+1) - 2}{ q-1},
\ees
which completes the proof of the proposition.

\end{proof}

The identities (\ref{Kcharsplit}) and (\ref{Kcharinert}) and our description of the packet $\Pi(\xi)$ imply that

\bes
\sum_{ \pi \in \Pi(\xi) } \tr( \pi_f( 1_{K(\gn)} ) ) = 2\tr( \xi_f( 1^T_{K(\gn)}) ),
\ees
so that (\ref{beta}) becomes

\be
\label{beta1}
\beta^1_{(2)}( Y(\gn)) \ll V(\gn) \sum_{  \substack{ \xi \in L^2_d(H, \mu^{-1}) \\ \xi_\infty \in \Xi_\infty } } \tr( \xi_f( 1^T_{K(\gn)}) ).
\ee

Any $\xi \in L^2_d(H, \mu^{-1})$ is of the form $\xi_\theta = (\theta \circ \det_0) \otimes ( \theta^{-2} \mu^{-1} \circ \lambda)$ for some character $\theta \in \widehat{I}_E^1$, and the condition that $(\xi_\theta)_\infty \in \Xi_\infty$ restricts $\theta_\infty$ to a finite set $\Theta_\infty$.  We define the conductor $\gf_\theta$ of $\theta$ to be the largest ideal $\gm$ such that $\theta$ is trivial on $U(1, \gm)$.

Assume that $\theta \in \widehat{I}_E^1$ satisfies $\theta_\infty \in \Theta_\infty$ and $(\xi_\theta)_f( 1^T_{K(\gn)}) \neq 0$.  For $v \in S_f$, the condition $(\xi_\theta)_v( 1^T_{K_v(\gn)}) \neq 0$ and the fact that $1^T_{K_v(\gn)}$ is a smooth function that is independent of $\gn$ imply that $\ord_v \gf_\theta$ is bounded by a constant depending only on $K_v$.  If $v \notin S$, it may be easily seen that $(\xi_\theta)_v( 1^T_{K_v(\gn)}) \neq 0$ if and only if $\ord_v \gf_\theta \le \ord_v \gn$.  Consequently, there exists an ideal $\ga \subseteq \cO$ that is divisible only by primes in $S_f$ such that $\gf_\theta | \ga \gn$.  The number of characters with $\theta_\infty \in \Theta_\infty$ and $\gf_\theta | \ga \gn$ is $\sim | U(1, \cO) : U(1, \gn)|$, and for each $\theta$ we have

\bes
\tr( (\xi_\theta)_f( 1^T_{K(\gn)}) ) \ll N\gn^{-2} | U(2, \cO) : U(2, \gn)|^{-1}.
\ees
Combining these bounds with (\ref{beta1}) and subtituting the definition of $V(\gn)$, we obtain

\bas
\beta^1_{(2)}( Y(\gn)) & \ll \frac{ | U(1, \cO) : U(1, \gn)| | U(3, \cO) Z(\A_f) : U(3, \gn) Z(\A_f)| }{ N\gn^2 | U(2, \cO) : U(2, \gn)| } \\
& = \frac{ | U(3, \cO) : U(3, \gn)| }{ N\gn^2 | U(2, \cO) : U(2, \gn)| }.
\end{align*}
%\eas
%
The formulas for the order of the groups $GL_3$ and $U(3)$ over a finite field \cite{A} imply that this is $\ll N\gn^3$, which completes the proof.

\subsection{The lower bound}
\label{prooflower}

Let $\xi^0_\infty \in \Xi_\infty$ be the character that is equal to $(\det_0)^{-t_v-1} \lambda^1$ at every infinite place $v$, so that $\Pi_{v_0}(\xi^0_{v_0}) = \{ J^+, D^- \}$.  Define

\bes
\Theta(\gn) = \{ \theta \in \widehat{I}_E | \gf_\theta = \gn, (\xi_\theta)_\infty = \xi_\infty^0 \} \quad \text{and} \quad \Xi(\gn) = \{ \xi_\theta | \theta \in \Theta(\gn) \}.
\ees
As $\gn$ was assumed relatively prime to $S_f$, $\theta \in \Theta(\gn)$ is unramified at $S_f$ and hence trivial at all nonsplit $v \in S_f$.  Because $E/F$ is CM, the elements $x \in \cO_E$ with $Nx = 1$ are exactly the roots of unity in $E$, and it follows that $|\Xi(\gn)| = | \Theta(\gn) | \gg N\gn$.

For nonsplit $v \in S_f$, choose $K_v$ so that $\pi^n(1_v)^{K_v}$ and $\pi^s(1_v)^{K_v}$ are both nonzero.  For split $v \in S_f$ and $\xi \in \Xi(\gn)$, $\pi^n(\xi_v)$ is the principal series representation $I( \xi_v \otimes \det_0 \circ \mu_w)$.  We see that we may choose $K_v$ so that $\pi^n(\xi_v)^{K_v} \neq 0$ for all unramified $\xi_v$.  Matsushima's formula and the results of Section \ref{packets} once again imply that

\bes
\beta_{(2)}^1(Y(\gn)) \ge \sum_{\xi \in \Xi(\gn)} \sum_{ \substack{ \pi \in \Pi(\xi) \\ \pi_\infty = J^+ } } m(\pi) \dim( \pi_f^{K_f(\gn)} ).
\ees
Let $\xi \in \Xi(\gn)$, and let $I$ be a finite set of inert places disjoint from $S$.  Then, because we assumed there was at least one nonsplit $v \in S_f$, there exists $\pi_I \in \Pi(\xi)$ with $\pi_\infty = J^+$ and $m(\pi) = 1$, and such that the set of $v \notin S$ with $\pi_v = \pi^s(\xi_v)$ is exactly $I$.  We have assumed that $\pi_v^{K_v} \neq 0$ for all $v \in S_f$, and so $\pi_I$ makes a contribution of at least

\bes
\prod_{v \in I} \dim( \pi^s(\xi_v)^{K_v(\gn)} ) \prod_{v \notin S \cup I} \dim( \pi^n(\xi_v)^{K_v(\gn)} )
\ees
to $\beta_{(2)}^1(Y(\gn))$.  Summing over $I$, we obtain

\bes
\beta_{(2)}^1(Y(\gn)) \ge \prod_{ \substack{ v \notin S \\ v \text{ split} } } \dim( \pi^n(\xi_v)^{K_v(\gn)} ) \prod_{ \substack{ v \notin S \\ v \text{ inert} } } ( \dim( \pi^n(\xi_v)^{K_v(\gn)} ) + \dim( \pi^s(\xi_v)^{K_v(\gn)} ) ).
\ees
We now define $1^S_{K(\gn)} \in C(G(\A^S), 1)$ to be the characteristic function of $\otimes_{v \notin S} K_v(\gn) Z(F_v)$, and let $1^{S,T}_{K(\gn)} \in C(H(\A^S), \mu^{-1})$ be the product over the places $v \notin S$ of the transfers defined in Section \ref{proofupper}.  Applying the character identities (\ref{Kcharsplit}) and (\ref{Kcharinert}) and summing over $\Xi(\gn)$ gives

\bes
\beta_{(2)}^1(Y(\gn)) \gg V(\gn) \sum_{\xi \in \Xi(\gn)} \tr( \xi^S( 1^{S,T}_{K(\gn)} ) ).
\ees
We have

\bes
\tr( \xi^S( 1^{S,T}_{K(\gn)} ) ) \gg N\gn^{-2} | U(2, \cO) : U(2, \gn)|^{-1}
\ees
when $\xi \in \Xi(\gn)$, and reasoning as in the case of the upper bound gives $\beta_{(2)}^1(Y(\gn)) \gg N\gn^3$.


\begin{thebibliography}{9}

\bibitem{A}
E. Artin: \textit{The orders of the classical simple groups}, Comm. Pure Appl. Math. 8 (1955), 455-472.

\bibitem{BR}
D. Blasius, J. Rogawski: \textit{Fundamental lemmas for $U(3)$ and related groups}.  The zeta functions of Picard modular surfaces, 363-394, Univ. Mont\'eal, Montreal, QC, 1992.

\bibitem{BW}
A. Borel, N. Wallach: \textit{Continuous cohomology, discrete subgroups, and representations of reductive groups}. Mathematical Surveys and Monographs 67, American Mathematical Society, Providence, R.I. (2000).

\bibitem{CM}
M. Cossutta, S. Marshall: \textit{Theta lifting and cohomology growth in $p$-adic towers}, IMRN, rns139, doi:10.1093/imrn/rns139.

\bibitem{F1}
Y. Flicker: \textit{Elementary proof of a fundamental lemma for a unitary group}, Canadian J. Math. 50 (1998), 74-98.

\bibitem{F}
Y. Flicker: \textit{Automorphic representations of low rank groups}.  World Scientific Publishing Co. Pte. Ltd., Hackensack, NJ, 2006. xii+485 pp. ISBN: 981-256-803-4

\bibitem{GRS}
S. Gelbart, J. Rogawski, D. Soudry: \textit{Endoscopy, theta-liftings, and period integrals for the unitary group in three variables}, Ann. of Math. 145 (1997),  no. 3, 419-476.

\bibitem{K}
R. Kottwitz: \textit{Calculation of some orbital integrals}.  The zeta functions of Picard modular surfaces, 349-362, Univ. Montr\'eal, Montreal, QC, 1992.

\bibitem{L}
W. Landherr: \textit{\"Aquivalenze Hermitscher formen \"uber einem beliebigen algebraischen zahlk\"orper}, Abh. Math. Sem. Hamb. 11 (1936), 245-248.

\bibitem{PY}
G. Prasad, S.-K. Yeung: \textit{Fake projective planes}, Invent. Math. 168 (2007), 321-370.

\bibitem{R1}
J. Rogawski: \textit{Automorphic representations of unitary groups in three variables}.  Annals of Mathematics Studies, 123. Princeton University Press, Princeton, NJ, 1990. xii+259 pp. ISBN: 0-691-08586-2

\bibitem{R2}
J. Rogawski: \textit{The multiplicity formula for A-packets}.  The zeta functions of Picard modular surfaces, 395-419, Univ. Montr\'eal, Montreal, QC, 1992.

\bibitem{SX} P. Sarnak, X. Xue: \textit{Bounds for multiplicities of automorphic representations}, Duke Math. J.  64  (1991),  no. 1, 207-227.

\end{thebibliography}
\end{document}